\newtheorem{theorem}{Theorem}
\newtheorem{condition}[theorem]{Condition}
\newtheorem{remark}[theorem]{Remark}
\newenvironment{proof}[1][Proof]{\noindent\textbf{#1.} }{\ \rule{0.5em}{0.5em}}
\begin{document}

\title{On The Dependence Structure of Wavelet Coefficients for Spherical
Random Fields\thanks{%
We are very grateful to a referee and an Associate Editor for their very
constructive remarks. The research of Xiaohong Lan was supported by the
Equal Opportunity Committee of the University of Rome Tor Vergata.}}
\author{Xiaohong Lan \\
Institute of Mathematics, Chinese Academy of Sciences\\
and Department of Mathematics, University of Rome Tor Vergata\\
Email: lan@mat.uniroma2.it \and Domenico Marinucci \\
Department of Mathematics, University of Rome Tor Vergata\\
Email: marinucc@mat.uniroma2.it}
\maketitle

\begin{abstract}
We consider the correlation structure of the random coefficients for a class
of wavelet systems on the sphere (labelled Mexican needlets) which was
recently introduced in the literature by \cite{gm}. We provide necessary and
sufficient conditions for these coefficients to be asymptotic uncorrelated
in the real and in the frequency domain. Here, the asymptotic theory is
developed in the high frequency sense. Statistical applications are also
discussed, in particular with reference to the analysis of Cosmological data.

\begin{itemize}
\item \textbf{Keywords and Phrases: }Spherical Random Fields, Wavelets,
Mexican Needlets, High Frequency Asymptotics, Cosmic Microwave Background
Radiation

\item \textbf{AMS\ 2000 Classification: }Primary 60G60; Secondary 62M40,
42C40, 42C10
\end{itemize}
\end{abstract}

\section{Introduction}

There is currently a rapidly growing literature on the construction of
wavelets systems on the sphere, see for instance \cite{freeden}, \cite{jfaa1}%
, \cite{dahlke}, \cite{poisson}, \cite{rosca}, \cite{jfaa3} and the
references therein. Some of these attempts have been explicitly motivated by
extremely interesting applications, for instance in the framework of
Astronomy and/or Cosmology; concerning the latter, special emphasis has been
devoted to wavelet techniques for the statistical study of the Cosmic
Microwave Background (CMB) radiation (\cite{jfaa2}). CMB can be viewed as
providing observations on the Universe in the immediate adjacency of the Big
Bang, and as such it has been the object of immense theoretical and applied
interest over the last decade \cite{dode2004}.

Among spherical wavelets, particular attention has been devoted to so-called
needlets, which were introduced into the Functional Analysis literature by %
\cite{npw1,npw2}; their statistical properties were first considered by \cite%
{bkmpb}, \cite{bkmp}, \cite{fay08} and \cite{ejslan}. In particular, it has
been shown in \cite{bkmpb} that \ random needlet coefficients enjoy a
capital uncorrelation property: namely, for any fixed angular distance,
random needlets coefficients are asymptotically uncorrelated as the
frequency parameter grows larger and larger. The meaning of this
uncorrelation property must be carefully understood, given the specific
setting of statistical inference in Cosmology. Indeed, CMB can be viewed as
a single realization of an isotropic random field on a sphere of a finite
radius (\cite{dode2004}). The asymptotic theory is then entertained in the
high frequency sense, i.e. it is considered that observations at higher and
higher frequencies (smaller and smaller scales) become available with the
growing sophistication of CMB satellite experiments. Of course,
uncorrelation entails independence in the Gaussian case: as a consequence,
from the above-mentioned property it follows that an increasing array of
asymptotically $i.i.d.$ coefficients can be derived out of a single
realization of a spherical random field, making thus possible the
introduction of a variety of statistical procedures for testing
non-Gaussianity, estimating the angular power spectrum, testing for
asymmetries, implementing bootstrap techniques, testing for
cross-correlation among CMB\ and Large Scale Structure data, and many
others, see for instance \cite{bkmpb}, \cite{bkmp}, \cite{guilloux}, \cite%
{ejslan}, \cite{fay08}, \cite{pietrobon}, \cite{mpb08}, \cite{dela08}, \cite%
{betoule08}, \cite{lan2008}, \cite{rudjord}. We note that the relevance of
high frequency asymptotics is not specific to Cosmology (cf. e.g. financial
data).

Given such a widespread array of techniques which are made feasible by means
of the uncorrelation property, it is natural to investigate to what extent
this property should be considered unique for the construction in \cite%
{npw1,npw2}, or else whether it is actually shared by other proposals. In
particular, we shall focus here on the approach which has been very recently
advocated by \cite{gm}, see also \cite{freeden} for a related setting. This
approach (which we shall discuss in Section 2) can be labelled Mexican
needlets, for reasons to be made clear later. Its analysis is made
particularly interesting by the fact that, as we shall discuss below, the
Mexican needlets can be considered asymptotically equivalent to the
Spherical Mexican Hat Wavelet (SMHW), which is currently the most popular
wavelet procedure in the Cosmological literature (see again \cite{jfaa2}).
As such, the investigation of their properties will fill a theoretical gap
which is certainly of interest for CMB data analysis.

Our aim in this paper is then to investigate the correlation properties of
the Mexican needlets coefficients. The stochastic properties of wavelets
have been very extensively studied in the mathematical statistics
literature, starting from the seminal papers \cite{doukhan, doukhan2}. We
must stress, however, that our framework here is very different: indeed, as
explained earlier we shall be concerned with circumstances where
observations are made at higher and higher frequencies on a single
realization of a spherical random field. As such, no form of mixing or
related properties can be assumed on the data and the proofs will rely more
directly on harmonic methods, rather than on standard probabilistic
arguments.

We shall provide both a positive and a negative result: namely, we will
provide necessary and sufficient conditions for the Mexican needlets
coefficients to be uncorrelated, depending on the behaviour of the angular
power spectrum of the underlying (mean square continuous and isotropic)
random fields. In particular, on the contrary of what happens for the
needlets in \cite{npw1,npw2}, we shall show that there is indeed correlation
of the random coefficients when the angular power spectrum is decaying
faster than a certain limit. However, higher order versions (already
considered in \cite{gm}) of the Mexican needlets can indeed provide
uncorrelated coefficients, depending on a parameter which is related to the
decay of the angular power spectrum. In some sense, a heuristic rationale
under these results can be explained as follows: the correlation among
coefficients is introduced basically by the presence in each of these terms
of random elements which are fixed (with respect to growing frequencies) in
a given realization of the random field, because they depend only on very
large scale behaviour (this is known in the Physical literature as a Cosmic
Variance effect). Because of the compact support in frequency in the
needlets as developed by \cite{npw1, npw2}, these low-frequency components
are always dropped and uncorrelation is ensured. On the other hand, the same
components can be dominant for Mexican needlets, in which cases it becomes
necessary to introduce suitably modified versions which are better localized
in the frequency domain (i.e., they allow less weight on very low frequency
components).

As well-known, there is usually a trade-off between localization properties
in the frequency and real domains, as a consequence of the Uncertainty
Principle (``It is impossible for a non-zero function and its Fourier
transform to be simultaneosuly very small'', see for instance \cite{havin}).
In view of this, an interesting consequence of our results can be loosely
suggested as follows: the better the localization in real domain, the worst
the correlation properties. This is clearly a paradox, and we do not try to
formulate it more rigorously from the mathematical point of view - some
numerical evidence will be collected in an ongoing, more applied work.
However, we hope that the previous discussion will help to shed some light
within the class of needlets; in particular, it should clarify that the
uncorrelation property of wavelets coefficients does not follow at all by
their localization properties in real domain. Indeed, given the fixed-domain
asymptotics we are considering, perfect localization in real space does not
ensure any form of uncorrelation (all random values at different locations
on the sphere have in general a non-zero correlation).

The plan of the paper is as follows: in Section 2 we shall \ review some
basic results on isotropic random fields on the sphere and the (Mexican and
standard) needlets constructions. In Section 3 and 4 we establish our main
results, providing necessary and sufficient conditions for the uncorrelation
properties to hold; in Section 5 we review some statistical applications.

\section{Isotropic Random Fields and Spherical Needlets}

\subsection{Spherical Random Fields}

In this paper, we shall always be concerned with zero-mean, finite variance
and isotropic random fields on the sphere, for which the following spectral
representation holds, in the mean square sense:

\begin{equation}
T(x)=\sum_{lm}a_{lm}Y_{lm}(x)\text{ , }x\in S^{2},  \label{specrap}
\end{equation}%
where $\left\{ a_{lm}\right\} _{l,m},$ $m=-l,...,l$ is a triangular array of
zero-mean, orthogonal, complex-valued (for $m\neq 0$) random variables with
variance $\mathbb{E}|a_{lm}|^{2}=C_{l},$ the angular power spectrum of the
random field. The functions $\left\{ Y_{lm}(x)\right\} $ are the so-called
spherical harmonics, i.e. the eigenvectors of the Laplacian operator on the
sphere \cite{faraut}, \cite{vmk}, \cite{stein} 
\begin{eqnarray*}
\Delta _{S^{2}}Y_{lm}(\vartheta ,\varphi ) &=&\left[ \frac{1}{\sin \vartheta 
}\frac{\partial }{\partial \vartheta }\left\{ \sin \vartheta \frac{\partial 
}{\partial \vartheta }\right\} +\frac{1}{\sin ^{2}\vartheta }\frac{\partial
^{2}}{\partial \varphi ^{2}}\right] Y_{lm}(\vartheta ,\varphi ) \\
&=&-l(l+1)Y_{lm}(\vartheta ,\varphi )
\end{eqnarray*}%
where we have moved to spherical coordinates $x=(\vartheta ,\varphi )$, $%
0\leq \vartheta \leq \pi $ and $0\leq \varphi <2\pi $ . It is a well-known
result that the spherical harmonics provide a complete orthonormal systems
for $L^{2}(S^{2}).$ There are many routes for establishing (\ref{specrap}),
usually by means of Karhunen-Lo\'{e}ve arguments, the Spectral
Representation Theorem, or the Stochastic Peter-Weyl theorem, see for
instance \cite{adler}. The spherical harmonic coefficients $a_{lm}$ can be
recovered by means of the Fourier inversion formula%
\begin{equation}
a_{lm}=\int_{S^{2}}T(x)\overline{Y_{lm}(x)}dx\text{ .}  \label{invfor}
\end{equation}%
If the random field is mean-square continuous, the angular power spectrum $%
\left\{ C_{l}\right\} $ must satisfy the summability condition 
\begin{equation*}
\sum_{l}(2l+1)C_{l}<\infty \text{ .}
\end{equation*}%
We shall introduce a slightly stronger condition, as follows (see \cite%
{bkmpb, bkmp,fay08,ejslan}).

\begin{condition}
\label{cdA} For all $B>1,$ there exist $\alpha >2,$ and $\left\{
g_{j}(.)\right\} _{j=1,2,...}$ a sequence of functions such that%
\begin{equation}
C_{l}=l^{-\alpha }g_{j}(\frac{l}{B^{j}})>0\text{ , for }B^{j-1}<l<B^{j+1}%
\text{ , }j=1,2,...  \label{Specon1}
\end{equation}%
where 
\begin{equation*}
c_{0}^{-1}\leq g_{j}\leq c_{0}\text{ for all }j\in \mathbb{N}\text{ , and }%
\sup_{j}\sup_{B^{-1}\leq u\leq B}|\frac{d^{r}}{du^{r}}g_{j}(u)|\leq c_{r}%
\text{ ,}
\end{equation*}%
\begin{equation*}
\text{some }c_{0},c_{1},...c_{M}>0\text{ },\text{ }M\in \mathbb{N}\text{ .}
\end{equation*}
\end{condition}

In practice, random fields such as CMB are not fully observed, i.e. there
are some missing observations in some regions of $S^{2};$ (\ref{invfor}) is
thus unfeasible in its exact form, and this motivates the introduction of
spherical wavelets such as needlets.

\subsection{NPW Needlets}

The construction of the standard needlet system is detailed in \cite%
{npw1,npw2}; we can label this system \emph{NPW needlets} and we sketch here
a few details for completeness. Let $\phi $ be a $C^{\infty }$ function
supported in $|\xi |\leq 1$, such that $0\leq \phi (\xi )\leq 1$ and $\phi
(\xi )=1$ if $|\xi |\leq 1/B$, $B>1$. Define 
\begin{equation}
b^{2}(\xi )=\phi (\frac{\xi }{B})-\phi (\xi )\geq 0\text{ so that }\forall
|\xi |\geq 1\text{ },\text{ }\sum_{j=0}^{\infty }b^{2}(\frac{\xi }{B^{j}})=1%
\text{ .}  \label{bdef}
\end{equation}%
It is immediate to verify that $b(\xi )\neq 0$ only if $\frac{1}{B}\leq |\xi
|\leq B$. The needlets frame $\left\{ \varphi _{jk}(x)\right\} $ is then
constructed as 
\begin{equation}
\varphi _{jk}(x):=\sqrt{\lambda _{jk}}\sum_{l}b(\frac{l}{B^{j}}%
)\sum_{m=-l}^{l}Y_{lm}(\xi _{jk})\overline{Y_{lm}}(x)\text{ .}  \label{3}
\end{equation}%
Here, $\left\{ \lambda _{jk}\right\} $ is a set of \emph{cubature weights}
corresponding to \emph{the cubature points} $\left\{ \xi _{jk}\right\} ;$
they are such to ensure that, for all polynomials $Q_{l}(x)$ of degree
smaller than $B^{j+1}$ 
\begin{equation*}
\sum_{k}Q_{l}(\xi _{jk})\lambda _{jk}=\int_{S^{2}}Q_{l}(x)dx\text{ .}
\end{equation*}%
The main localization property of $\left\{ \varphi _{jk}(x)\right\} $ is
established in \cite{npw1}, where it is shown that for any $M\in \mathbb{N}$
there exists a constant $c_{M}>0$ s.t., for every $\xi \in \mathbb{S}^{2}$: 
\begin{equation*}
\left| \varphi _{jk}(\xi )\right| \leq \frac{c_{M}B^{j}}{(1+B^{j}\arccos
\langle \xi _{jk},\xi \rangle )^{M}}\text{ uniformly in }(j,k)\text{ }.
\end{equation*}%
More explicitly, needlets are almost exponentially localized around any
cubature point, which motivates their name. In the stochastic case, the
(random) spherical needlet coefficients are then defined as 
\begin{equation}
\beta _{jk}=\int_{\mathbb{S}^{2}}T(x)\varphi _{jk}(x)dx=\sqrt{\lambda _{jk}}%
\sum_{l}b(\frac{l}{B^{j}})\sum_{m=-l}^{l}a_{lm}Y_{lm}(\xi _{jk})\text{ }.
\label{bjk}
\end{equation}%
It is then immediate to derive the correlation coefficient 
\begin{eqnarray*}
Corr\left( \beta _{jk},\beta _{jk^{\prime }}\right)  &=&\frac{E\beta
_{jk}\beta _{jk^{\prime }}}{\sqrt{E\beta _{jk}^{2}E\beta _{jk^{\prime }}^{2}}%
} \\
&=&\frac{\sqrt{\lambda _{jk}\lambda _{jk^{\prime }}}\underset{l\geq 1}{\sum }%
b^{2}(\frac{l}{B^{j}})\frac{2l+1}{4\pi }C_{l}P_{l}\left( \left\langle \xi
_{jk},\xi _{jk^{\prime }}\right\rangle \right) }{\sqrt{\lambda _{j,k}\lambda
_{j,k^{\prime }}}\underset{l\geq 1}{\sum }b^{2}(\frac{l}{B^{j}})\frac{2l+1}{%
4\pi }C_{l}}\text{ .}
\end{eqnarray*}%
where $P_{l}$ is the ultraspherical (or Legendre) polynomial of order $\frac{%
1}{2}$ and degree $l$; the last step follows from the well-known identity %
\cite{vmk}

\begin{equation*}
L_{l}\left( \left\langle \xi ,\eta \right\rangle \right) :=\overset{l}{%
\underset{m=-l}{\sum }}Y_{lm}\left( \xi \right) \overline{Y_{lm}\left( \eta
\right) }=\frac{2l+1}{4\pi }P_{l}\left( \left\langle \xi ,\eta \right\rangle
\right) \text{ }.
\end{equation*}%
The capital stochastic property for random needlet coefficients is provided
by \cite{bkmpb}, where it is shown that under Condition \ref{cdA} the
following inequality holds%
\begin{equation}
\left| Corr\left( \beta _{jk},\beta _{jk^{\prime }}\right) \right| \leq 
\frac{C_{M}}{\left( 1+B^{j}d\left( \xi _{jk},\xi _{jk^{\prime }}\right)
\right) ^{M}},\text{ some }C_{M}>0\text{ , }  \label{corr1}
\end{equation}%
where $d\left( \xi _{jk},\xi _{jk^{\prime }}\right) =\arccos \left(
\left\langle \xi _{jk},\xi _{jk^{\prime }}\right\rangle \right) $ is the
standard distance on the sphere.

\subsection{Mexican Needlets}

The construction in \cite{gm} is in a sense similar to NPW needlets in \cite%
{npw1,npw2} (see also \cite{freeden}), insofar as a combination of Legendre
polynomials with a smooth function is proposed; the main difference is that
for NPW\ needlets the kernel is taken to be compactly supported, which
allows on one hand for an exact reconstruction function (needlets make up a
tight frame), at the same time granting exact localization in the
frequency-domain. It should be added, however, that the approach by \cite{gm}
enjoys some undeniable strong points: firstly, it covers general oriented
manifolds and not simply the sphere; moreover it yields Gaussian
localization properties in the real domain. A further nice benefit is that
it can be formulated in terms of an explicit recipe in real space, a feature
which is certainly valuable for practitioners. In particular, as we report
below in the high-frequency limit the Mexican needlets are asymptotically
close to the Spherical Mexican Hat Wavelets, which have been exploited in
several Cosmological papers but still lack a sound stochastic investigation.

More precisely, \cite{gm} propose to replace $b(l/B^{j})$ in (\ref{3}) by $%
f(l(l+1)/B^{2j}),$ where $f(.)$ is some Schwartz function vanishing at zero
(not necessarily of bounded support) and the sequence $\left\{
-l(l+1)\right\} _{l=1,2,...}$ represents the eigenvalues of the Laplacian
operator $\Delta _{S^{2}}.$ In particular, Mexican needlets can be obtained
by taking $f(s)=s\exp (-s),$ so to obtain%
\begin{equation*}
\psi _{jk;1}\left( x\right) =\sqrt{\lambda _{jk}}\underset{l\geq 1}{\sum }%
\frac{l\left( l+1\right) }{B^{2j}}e^{-\frac{l\left( l+1\right) }{B^{2j}}%
}L_{l}\left( \left\langle x,\xi _{jk}\right\rangle \right) \text{ .}
\end{equation*}%
The resulting functions make up a frame which is not tight, but very close
to, in a sense which is made rigorous in \cite{gm}. Exact cubature formulae
cannot hold (in particular, $\left\{ \lambda _{jk}\right\} $ are not exactly
cubature weights in this case), because polynomials of infinitely large
order are involved in the construction, but again this entails very minor
approximations in practical terms. More generally, it is possible to
consider higher order Mexican needlets by focussing on $f(s)=s^{p}\exp (-s),$
so to obtain%
\begin{equation*}
\psi _{jk;p}\left( x\right) :=\sqrt{\lambda _{jk}}\underset{l\geq 1}{\sum }(%
\frac{l(l+1)}{B^{2j}})^{p}e^{-l(l+1)/B^{2j}}L_{l}\left( \left\langle x,\xi
_{jk}\right\rangle \right) \text{ .}
\end{equation*}%
The random spherical Mexican needlet coefficients are immediately seen to be
given by%
\begin{eqnarray*}
\beta _{jk;p} &=&\int_{S^{2}}T\left( x\right) \psi _{jk}\left( x\right)
dx=\int_{S^{2}}\underset{l\geq 0}{\sum }\overset{l}{\underset{m=-l}{\sum }}%
a_{lm}Y_{lm}\left( x\right) \psi _{jk;p}\left( x\right) dx \\
&=&\sqrt{\lambda _{jk}}\underset{l\geq 1}{\sum }(\frac{l(l+1)}{B^{2j}}%
)^{p}e^{-l(l+1)/B^{2j}}\overset{l}{\underset{m=-l}{\sum }}a_{lm}Y_{lm}\left(
\xi _{jk}\right) \text{ },
\end{eqnarray*}%
whence their covariance is%
\begin{equation*}
E\beta _{jk;p}\beta _{jk^{\prime };p}=\sqrt{\lambda _{jk}\lambda
_{jk^{\prime }}}\underset{l\geq 1}{\sum }(\frac{l(l+1)}{B^{2j}}%
)^{p}e^{-2l(l+1)/B^{2j}}\frac{2l+1}{4\pi }C_{l}P_{l}\left( \left\langle \xi
_{jk},\xi _{jk^{\prime }}\right\rangle \right) .
\end{equation*}%
Throughout this paper, we shall only consider weight functions of the form $%
f(s)=s^{p}\exp (-s).$ It is certainly possible to consider more general
constructions; however this specific shape lends itself to very neat
results, allowing us to produce both upper and lower bounds for the
coefficients' correlation. Also, it makes possible a clear interpretation of
the final results, i.e. the effect of varying $p$ on the structure of
dependence is immediately understood; this is, we believe, a valuable asset
for practitioners. In the sequel, we shall drop the subscript $p$, whenever
possible without risk of confusion.

\ \newline

\begin{remark}
As mentioned earlier, it is suggested from results in (\cite{gm}) that
Mexican needlets provide asymptotically a very good approximation to the
widely popular Spherical Mexican Hat Wavelets (SMHW), which have been used
in many physical papers; the asymptotic analysis of the stochastic
properties of SMHW coefficients is still completely open for research. The
discretized form of the SMHW\ can be written as 
\begin{equation*}
\Psi _{jk}(\theta ;B^{-j})=\frac{1}{(2\pi )^{\frac{1}{2}}\sqrt{2}%
B^{-j}(1+B^{-2j}+B^{-4j})^{\frac{1}{2}}}[1+(\frac{y}{2})^{2}]^{2}[2-\frac{%
y^{2}}{2t^{2}}]e^{-y^{2}/4B^{-j2}},
\end{equation*}%
where the coordinates $y=2\tan \frac{\theta }{2}$ follows from the
stereographic projection on the tangent plane in each point of the sphere;
here we take $\theta =\theta _{jk}\left( x\right) :=d(x,\xi _{jk})$. Now
write%
\begin{equation*}
\psi _{jk;p}\left( \theta _{jk}\left( x\right) \right) =\psi _{jk;p}\left(
\theta \right) \text{ ;}
\end{equation*}%
by following the arguments in \cite{gm} and developing their bounds further,
it can be argued that%
\begin{equation}
\left| \Psi _{jk}(\theta ;B^{-j})-K_{jk}\psi _{jk;p}\left( \theta \right)
\right| =B^{-j}O\left( \min \left\{ \theta ^{4}B^{4j},1\right\} \right) 
\text{ ,}  \label{lan}
\end{equation}%
for some suitable normalization constant $K_{jk}>0.$ Equation (\ref{lan})
suggests that our results below can be used as a guidance for the asymptotic
theory of random SMHW\ coefficients. The validity of this approximation over
relevant cosmological models and its implications for statistical procedures
of CMB\ data analysis are currently being investigated.
\end{remark}

\section{Stochastic properties of Mexican needlet coefficients, I: upper
bounds}

As mentioned in the Introduction, having established ($\ref{corr1}$) opened
the way to several developments for the statistical analysis of spherical
random fields. It is therefore a very important question to establish under
what circumstances these results can be extended to other constructions,
such as Mexican needlets. In this and the following Section, we provide a
full characterization with positive and negative results. We start by
writing the expression for the correlation coefficients, which is given by%
\begin{equation*}
Corr\left( \beta _{j_{1}k_{1}},\beta _{j_{2}k_{2}}\right)
\end{equation*}%
\begin{equation*}
=\frac{\underset{l\geq 1}{\sum }(\frac{l(l+1)}{B^{j_{1}}})^{p}(\frac{l(l+1)}{%
B^{j_{2}}})^{p}e^{-l(l+1)(B^{-2j_{1}}+B^{-2j_{2}})}(2l+1)C_{l}P_{l}\left(
\left\langle \xi _{j_{1}k_{1}},\xi _{j_{2}k_{2}}\right\rangle \right) }{%
\left\{ \underset{l\geq 1}{\sum }(\frac{l(l+1)}{B^{j_{1}}}%
)^{4p}e^{-2l(l+1)/B^{2j_{1}}}(2l+1)C_{l}\right\} ^{1/2}\left\{ \underset{%
l\geq 1}{\sum }(\frac{l(l+1)}{B^{j_{2}}}%
)^{4p}e^{-2l(l+1)/B^{2j_{2}}}(2l+1)C_{l}\right\} ^{1/2}}\text{ .}
\end{equation*}%
We now provide upper bounds on the correlation of random coefficients, as
follows\footnote{%
While finishing this paper, we learned by personal communication that
working independently and at the same time as us, A.Mayeli has obtained a
result similar to Theorem \ref{uno} for the case $j_{1}=j_{2}$, see \cite%
{mayeli}. The statements and the assumptions in the two approaches are not
equivalent and the methods of proofs are entirely different; we believe both
are of independent interest.}.

\begin{theorem}
$\label{uno}$Assume Condition \ref{cdA} holds with $\alpha <4p+2$ and $M\geq
4p+2-\alpha ;$ then there exist some constant $C_{M}>0$ 
\begin{equation}
\left\vert Corr\left( \beta _{j_{1}k_{1};p},\beta _{j_{2}k_{2};p}\right)
\right\vert \leq \frac{C_{M}}{\left( 1+B^{(j_{1}+j_{2})/2-\log
_{B}(j_{1}+j_{2})/2}d\left( \xi _{jk},\xi _{jk^{\prime }}\right) \right)
^{\left( 4p+2-\alpha \right) }}.  \label{3.1}
\end{equation}
\end{theorem}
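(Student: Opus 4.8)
The plan is to reduce the correlation bound to a pointwise estimate on a sum of Legendre polynomials weighted by the kernel $u_l := (\tfrac{l(l+1)}{B^{2j_1}})^p(\tfrac{l(l+1)}{B^{2j_2}})^p e^{-l(l+1)(B^{-2j_1}+B^{-2j_2})}(2l+1)C_l$, and then to exploit the classical localization machinery for such sums. First I would record that, under Condition~\ref{cdA}, $C_l \asymp l^{-\alpha}$, so that the summand behaves like $l^{4p+1-\alpha} e^{-l(l+1)(B^{-2j_1}+B^{-2j_2})}$; since $\alpha < 4p+2$ the exponent $4p+1-\alpha > -1$, and the Gaussian-type decay in $l$ localizes the mass of both the numerator and the denominator around $l \asymp B^{(j_1+j_2)/2}$ (more precisely around the scale where $l^2(B^{-2j_1}+B^{-2j_2}) \asymp 1$). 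This heuristic already explains why the effective frequency in~\eqref{3.1} is $B^{(j_1+j_2)/2}$, up to the logarithmic correction.

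Next I would handle the denominator: using $C_l \asymp l^{-\alpha}$ and comparing the sum to an integral, one gets
\[
\sum_{l\geq 1}\Bigl(\tfrac{l(l+1)}{B^{2j_i}}\Bigr)^{4p} e^{-2l(l+1)/B^{2j_i}}(2l+1)C_l \asymp B^{(2-\alpha) j_i},
\]
so the product of the two denominator factors (each raised to the $1/2$) is $\asymp B^{(2-\alpha)(j_1+j_2)/2}$. The real work is the numerator. Here I would invoke the same type of argument used in \cite{bkmpb} to prove~\eqref{corr1}: write the numerator as $\sum_l u_l P_l(\langle \xi_{j_1k_1},\xi_{j_2k_2}\rangle)$, note that the weight $s \mapsto s^{2p} e^{-2s}$ (with $s = l(l+1)B^{-2j}$, suitably symmetrized over the two scales) together with the factor $l^{1-\alpha} g_j$ from the spectrum is smooth and has all derivatives controlled on the relevant dyadic range, and then apply the standard bound on $\big|\sum_l \Phi(l/N) P_l(\cos\theta)\big| \leq c_M N^2 (1+N\theta)^{-M}$ for smooth compactly-concentrated $\Phi$. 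The Gaussian decay plays the role of the compact support, at the cost of truncating the tails: one truncates the sum at $l \leq N\log N$ with $N = B^{(j_1+j_2)/2}$, bounds the discarded Gaussian tail by something negligible compared to the denominator, and on the retained range treats $\Phi$ as effectively supported on $[0, c\log N]$ in the variable $l/N$ — this is exactly where the $\log_B(j_1+j_2)/2$ correction in the exponent of~\eqref{3.1} enters, since the effective support radius grows logarithmically.

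Putting the pieces together: the numerator is bounded by $c_M\, B^{(2-\alpha)(j_1+j_2)/2} (\log N)^{\text{const}} \cdot \big(1 + \widetilde N\, d(\xi_{j_1k_1},\xi_{j_2k_2})\big)^{-M}$ with $\widetilde N = B^{(j_1+j_2)/2}/\log N = B^{(j_1+j_2)/2 - \log_B((j_1+j_2)/2)}$ (absorbing the polylog factor into the constant by choosing $M$ slightly larger, which is why the hypothesis asks $M \geq 4p+2-\alpha$ with the conclusion stated for exponent $4p+2-\alpha$); dividing by the denominator $\asymp B^{(2-\alpha)(j_1+j_2)/2}$ cancels the frequency-dependent prefactor and leaves precisely~\eqref{3.1}. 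The main obstacle is controlling the Legendre sum with a Gaussian (rather than compactly supported) weight: unlike in \cite{npw1,npw2,bkmpb}, the sum is genuinely infinite, so the tail-truncation argument must be made quantitative, showing the tail beyond $l \asymp N\log N$ contributes less than the main term after normalization, and the smoothness-and-derivative bounds on the effective profile $\Phi$ must be verified uniformly as its support radius slowly grows — this uniform control, together with the correct bookkeeping of how the $\log$ factors trade against the exponent $M$, is the delicate part of the argument.
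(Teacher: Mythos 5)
Your overall skeleton does match the paper's: the denominator is indeed $\asymp B^{(2-\alpha )(j_{1}+j_{2})/2}$, and the numerator is handled by the \cite{npw1}-type localization estimate for weighted Legendre sums (in the paper, via the Dirichlet--Mehler representation, Poisson summation, and integration by parts on the Fourier transform of the profile). However, you have misidentified the obstruction that makes this step nontrivial here, and as a result both the logarithmic correction and the exponent $4p+2-\alpha $ are attributed to the wrong mechanism. The Gaussian tail at large $l$ is harmless: it decays superpolynomially, so truncating it costs nothing after normalization, and in any case enlarging the effective support of the profile at the high-frequency end would, after rescaling, give a \emph{larger} effective frequency, not the smaller $B^{(j_{1}+j_{2})/2-\log _{B}(j_{1}+j_{2})/2}$ appearing in \eqref{3.1}. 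The real difficulty is at the \emph{low}-frequency end. The effective profile is $G_{\alpha ,j}(t)=t^{2p-\alpha }g_{j}(t)e^{-2t(t+B^{-j})}I_{(B^{-j},\infty )}$, and because the Mexican weight does not vanish near $t=0$ while the spectrum contributes $t^{-\alpha }$, the quantities $\int_{B^{-j}}^{\infty }\left\vert \frac{d^{k}}{dt^{k}}\{t^{m}G_{\alpha ,j}(t)\}\right\vert dt$ are \emph{not} uniformly bounded in $j$: they grow like $B^{-j(2p+1+m-\alpha -k)}$ when $2p+1+m-\alpha -k<0$, and like $j\log B$ in the boundary case $k=2p+1+m-\alpha $ (this is estimate \eqref{intlog}, and is exactly the point where the paper notes its argument departs from \cite{npw1}). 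Consequently one can only integrate by parts $4p+2-\alpha $ times (with $m=2p+1$) before losing powers of $B^{j}$: the exponent $4p+2-\alpha $ in \eqref{3.1} is a hard cap imposed by this low-frequency singularity, not the result of ``choosing $M$ slightly larger to absorb polylogs,'' and the factor $j\log B=B^{\log _{B}j}\log B$ from the boundary case is precisely what produces the $\log _{B}$ shift in the effective frequency.

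Two smaller points. First, the scale at which $l^{2}(B^{-2j_{1}}+B^{-2j_{2}})\asymp 1$ is $l\asymp B^{\min (j_{1},j_{2})}$, not $B^{(j_{1}+j_{2})/2}$; the geometric mean in \eqref{3.1} comes from the denominator, which is the geometric mean of the two variances $B^{(2-\alpha )j_{1}}$ and $B^{(2-\alpha )j_{2}}$, so your opening heuristic does not explain the effective frequency. Second, because the decay exponent is capped at $4p+2-\alpha $ rather than at $M$, the bookkeeping you defer (``trading log factors against $M$'') cannot be carried out as described; the correct bookkeeping is the one in \eqref{intlog}, applied term by term to the derivatives $\frac{d^{m}}{d\omega ^{m}}\widehat{G}_{\alpha ,j}$ appearing in the Poisson-summation expansion of $\widehat{h}_{\pm }$. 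Without this, the ``delicate part'' you flag would not go through in the form you propose.
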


\begin{proof}
We prove (\ref{3.1}) following some ideas in \cite{npw1}. For notational
simplicity, we focus first on the case where $j_{1}=j_{2};$ we have%
\begin{equation}
Corr\left( \beta _{jk;p},\beta _{jk^{\prime };p}\right) =\frac{\underset{%
l\geq 1}{\sum }(\frac{l\left( l+1\right) }{B^{2j}})^{2p}e^{-2l(l+1)/B^{2j}}%
\frac{2l+1}{4\pi }C_{l}P_{l}\left( \left\langle \xi _{jk},\xi _{jk^{\prime
}}\right\rangle \right) }{\underset{l\geq 1}{\sum }(\frac{l\left( l+1\right) 
}{B^{2j}})^{2p}e^{-2l^{2}/B^{2j}}\frac{2l+1}{4\pi }C_{l}}.  \label{cor2}
\end{equation}%
Now replace $C_{l}=l^{-\alpha }g_{j}\left( \frac{l}{B^{j}}\right) $ in the
denominator of the above representation, for which we get%
\begin{equation*}
c_{0}^{-1}\frac{B^{-\left( 4p+2-\alpha \right) j}}{4p+2-\alpha }\leq
\sum_{l\geq 1}(\frac{l\left( l+1\right) }{B^{2j}})^{2p}e^{-2l(l+1)/B^{2j}}%
\frac{2l+1}{4\pi }C_{l}\leq c_{0}\frac{B^{-\left( 4p+2-\alpha \right) j}}{%
4p+2-\alpha }\text{ }.
\end{equation*}%
Denoting $\theta =\arccos \left\langle \xi _{j,k},\xi _{j,k^{\prime
}}\right\rangle $, the numerator can be written as%
\begin{equation}
\underset{l\geq 1}{\sum }(\frac{l\left( l+1\right) }{B^{2j}}%
)^{2p}e^{-2l(l+1)/B^{2j}}\frac{2l+1}{4\pi }C_{l}\frac{1}{\pi }\int_{\theta
}^{\pi }\frac{\sin \left( l+\frac{1}{2}\right) \varphi }{\left( \cos \theta
-\cos \varphi \right) ^{1/2}}d\varphi  \label{num}
\end{equation}%
where we have used the Dirichlet-Mehler integral representation for the
Legendre polynomials \cite{gra}.

The following steps and notations are very close to \cite{npw1}. We write%
\begin{eqnarray}
C_{B,g_{j}} &=&\underset{l\geq 1}{\sum }(\frac{l\left( l+1\right) }{B^{2j}}%
)^{2p}e^{-2l(l+1)/B^{2j}}\frac{2l+1}{4\pi }l^{-\alpha }g_{j}\left( \frac{l}{%
B^{j}}\right) \sin \left( l+\frac{1}{2}\right) \varphi  \label{cbj} \\
&:&=\frac{1}{2i}\underset{l\geq 1}{\sum }\left( h_{j+}(l)-h_{j-}\left(
l\right) \right)  \notag
\end{eqnarray}%
where%
\begin{equation*}
h_{j\pm }(u)=(\frac{u\left( u+1\right) }{B^{2j}})^{2p}\frac{2u+1}{4\pi }%
u^{-\alpha }g_{j}\left( \frac{u}{B^{j}}\right) e^{-2u(u+1)/B^{2j}\pm i(u+%
\frac{1}{2})\varphi }.
\end{equation*}%
By Poisson summation formula, we have%
\begin{equation*}
\underset{l\geq 1}{\sum }h_{j\pm }(l)=\frac{1}{2}\underset{l\in 
\mathbb{Z}
}{\sum }h_{j\pm }(l)=\frac{1}{2}\underset{\mu \in 
\mathbb{Z}
}{\sum }\widehat{h}_{j\pm }\left( 2\pi \mu \right) \text{ .}
\end{equation*}%
Denote%
\begin{equation}
G_{\alpha ,j}\left( t\right) :=t^{2p-\alpha }g_{j}\left( t\right)
e^{-2t(t+B^{-j})}I_{\left( B^{-j},\infty \right) }\text{ .}  \label{gbj}
\end{equation}%
Let us now recall the following standard property of Fourier transforms:%
\begin{equation*}
\left( i\omega \right) ^{k}\frac{d^{m}}{d\omega ^{m}}\widehat{f}\left(
\omega \right) =(\frac{d^{k}}{dx^{k}}\left\{ x^{m}f\left( x\right) \right\} 
\widehat{)(}\omega )\text{ }.
\end{equation*}%
Some simple computations yield%
\begin{eqnarray*}
\widehat{h}_{\pm }\left( 2\pi \mu \right) &=&\frac{B^{\left( 1-\alpha
\right) j}}{4\pi }\left\{ \sum_{m=1}^{2p}[2\binom{2p}{m-1}+\binom{2p}{m}%
]B^{-\left( 2p+1-m\right) j}\frac{d^{m}}{d\omega ^{m}}+B^{-\left(
2p+1\right) j}+2\frac{d^{2p+1}}{d\omega ^{2p+1}}\right\} \\
&&\times \int_{-\infty }^{\infty }G_{\alpha ,j}\left( t/B^{j}\right) e^{\pm
i(t+\frac{1}{2})\varphi -it\omega }dt\left| _{\omega =2\pi \mu }\right.
\end{eqnarray*}%
\begin{eqnarray*}
&=&\frac{B^{\left( 2-\alpha \right) j}e^{\pm i\varphi }}{4\pi }\left\{
\sum_{m=1}^{2p}[2\binom{2p}{m-1}+\binom{2p}{m}]B^{-\left( 2p+1-m\right) j}%
\frac{d^{m}}{d\omega ^{m}}\right\} \widehat{G}_{\alpha ,j}\left( \omega
\right) \left| _{\omega =B^{j}\left( 2\pi \mu \mp \varphi \right) }\right. \\
&&+\frac{B^{\left( 2-\alpha \right) j}e^{\pm i\varphi }}{4\pi }\left\{
B^{-\left( 2p+1\right) j}+2\frac{d^{2p+1}}{d\omega ^{2p+1}}\right\} \widehat{%
G}_{\alpha ,j}\left( \omega \right) \left| _{\omega =B^{j}\left( 2\pi \mu
\mp \varphi \right) }\right. ,
\end{eqnarray*}%
where%
\begin{equation*}
\widehat{G}_{\alpha ,j}\left( \omega \right) =\int_{%
\mathbb{R}
}G_{\alpha ,j}\left( t\right) e^{-it\omega }dt\text{ }.
\end{equation*}%
For all positive integers $k\leq M,$ we can obtain%
\begin{equation}
\left| \int_{B^{-j}}^{\infty }\frac{d^{k}}{dt^{k}}\left\{ t^{m}G_{\alpha
,j}\left( t\right) \right\} dt\right| \leq \left\{ 
\begin{array}{c}
\frac{k\Gamma \left( m+2p-\alpha \right) C_{g}}{L(p,m,\alpha ,k)}B^{-j\left(
2p+1+m-\alpha -k\right) },\text{ for }2p+1-\alpha +m\neq k \\ 
k\Gamma \left( m+2p-\alpha \right) C_{g}(j\log B)\text{ },\text{ for }%
2p+1-\alpha +m=k%
\end{array}%
\right.  \label{intlog}
\end{equation}%
where $C_{g}=\max \left\{ c_{0},...,c_{M}\right\} $ and 
\begin{equation*}
L(p,m,\alpha ,k):=\left\{ 
\begin{array}{c}
\Gamma \left( 2p+1+m-\alpha -k\right) \text{ when }\left( 2p+1+m-\alpha
-k\right) >0\text{ ,} \\ 
\left( \Gamma \left( k-2p-1-m+\alpha \right) \right) ^{-1}\text{ when }%
\left( 2p+1+m-\alpha -k\right) <0\text{ .}%
\end{array}%
\right.
\end{equation*}%
It should be noticed that our argument here differs from the one in \cite%
{npw1}, because we cannot assume the integrand function on the left-hand
side to be in $L^{1}$ for all $k\leq M.$ Let us now focus on the case where $%
k=4p+2-\alpha ,$ with $m=2p+1;$ we obtain%
\begin{eqnarray*}
&&\left| \frac{d^{2p+1}}{d\omega ^{2p+1}}\widehat{G}_{\alpha ,j}\left(
\omega \right) \right| \left| B^{j}\left( 2\pi \mu -\varphi \right) \right|
^{4p+2-\alpha } \\
&\leq &\left| \int_{B^{-j}}^{\infty }\frac{d^{k}}{dt^{k}}\left\{
t^{2p+1}G_{j}\left( t\right) \right\} dt\right| \leq \left( 4p+2-\alpha
\right) \Gamma \left( 4p+1-\alpha \right) C_{g}(j\log B);
\end{eqnarray*}%
therefore%
\begin{equation*}
\left| \frac{d^{2p+1}}{d\omega ^{2p+1}}\widehat{G}_{\alpha ,j}\left( \omega
\right) \right| \leq \frac{C_{2p+1,\alpha ,g}B^{-j\left( 4p+2-\alpha \right)
+\log _{B}j}}{\left( 2\pi \mu -\varphi \right) ^{4p+2-\alpha }},
\end{equation*}%
where%
\begin{equation*}
C_{2p+1,\alpha ,g}=\left( 4p+2-\alpha \right) \Gamma \left( 4p+1-\alpha
\right) C_{g}\log B\text{ }.
\end{equation*}%
The modifications needed for other cases are obvious and we obtain%
\begin{equation*}
B^{-\left( 2p+1-m\right) j}\left| \frac{d^{m}}{d\omega ^{m}}\widehat{G}%
_{\alpha ,j}\left( \omega \right) \right| \leq \frac{C_{m,\alpha
,g}B^{-j\left( 4p+2-\alpha \right) }}{\left( 2\pi \mu -\varphi \right)
^{4p+2-\alpha }},
\end{equation*}%
where%
\begin{equation*}
C_{m,\alpha ,g}=\frac{\left( 4p+2-\alpha \right) \Gamma \left( m+2p-\alpha
\right) C_{g}}{L(p,m,\alpha ,k)}\text{ }.
\end{equation*}%
Now let $C_{\alpha ,g}=\max \left\{ C_{m,\alpha ,g},\text{ }%
m=0,...,2p+1\right\} ;$ we have%
\begin{eqnarray}
&&\left| \widehat{h}_{\pm }\left( 2\pi \mu \right) \right|  \notag \\
&\leq &\frac{B^{\left( 2-\alpha \right) j}}{4\pi }C_{\alpha ,g}\left\{
\sum_{m=0}^{2p}\binom{2p}{m}\frac{B^{-j\left( 4p+2-\alpha \right) }}{\left(
2\pi \mu -\varphi \right) ^{4p+2-\alpha }}+\frac{B^{-j\left( 4p+2-\alpha
\right) +\log _{B}j}}{\left( 2\pi \mu -\varphi \right) ^{4p+2-\alpha }}%
\right\}  \notag \\
&\leq &\frac{2^{2p}C_{\alpha ,g}B^{-4pj+\log _{B}j}}{\left( 2\pi \mu \mp
\varphi \right) ^{4p+2-\alpha }},\text{ }\mu =1,2,...  \label{newproof}
\end{eqnarray}%
Therefore 
\begin{eqnarray*}
\left| C_{B,g_{j}}\right| &\leq &2^{2p}C_{\alpha ,g}\left( \frac{1}{2\varphi
^{4p+2-\alpha }}+\underset{\mu \in N}{\sum }\frac{1}{\left| 2\pi \mu \pm
\varphi \right| ^{4p+2-\alpha }}\right) B^{-4pj+\log _{B}j} \\
&\leq &2^{2p}\left( \frac{1}{2\varphi ^{4p+2-\alpha }}+\left| 4p+1-\alpha
\right| \pi ^{\alpha -4p-1}\right) C_{\alpha ,g}B^{-4pj+\log _{B}j}.
\end{eqnarray*}%
Hence, for the numerator of the correlation we have the bound%
\begin{equation*}
(\ref{num})\leq CB^{-4pj+\log _{B}j}\int_{\theta }^{\pi }\frac{\left( \frac{1%
}{2\varphi ^{4p+2-\alpha }}+\left| 4p+1-\alpha \right| \pi ^{\alpha
-4p-1}\right) }{\left( \cos \theta -\cos \varphi \right) ^{1/2}}d\varphi 
\text{ }.
\end{equation*}%
As in \cite{npw1}, when $0\leq \theta \leq \pi /2,$ we can get the following
inequality%
\begin{equation*}
(\ref{num})\leq C_{\alpha ,k,g}B^{-4pj+\log _{B}j}\int_{\theta }^{\pi }\frac{%
\pi ^{\alpha -4p-1}}{0.27\varphi ^{4p+2-\alpha }\left( \varphi ^{2}-\theta
^{2}\right) ^{1/2}}d\varphi \leq C_{1}B^{-4pj+\log _{B}j}\theta ^{\alpha
-4p-2}\text{ }.
\end{equation*}%
If $\pi /2\leq \theta \leq \pi ,$ letting $\widetilde{\theta }=\pi -\theta ,%
\widetilde{\varphi }=\pi -\varphi ,$ we can obtain the same bound. Going
back to (\ref{cor2}), we obtain%
\begin{eqnarray*}
Corr\left( \beta _{j,k},\beta _{j,k^{\prime }}\right) &\leq &C\frac{\theta
^{\alpha -4p-2}B^{-4pj+\log _{B}j}}{B^{\left( 2-\alpha \right) j}} \\
&\leq &C\theta ^{\alpha -4p-2}B^{-\left( j-\log _{B}j\right) \left(
4p+2-\alpha \right) }\rightarrow 0,\text{ }as\text{ }j\rightarrow \infty 
\text{ ,}
\end{eqnarray*}

We thus get inequality (\ref{3.1}) for $j_{1}=j_{2}$.

Now let us consider $j_{1}\neq j_{2}.$ As the proof is very similar to the
arguments above, we omit many details.\ In the sequel, for any two sequences 
$a_{l},b_{l},$ we write $a_{l}\approx b_{l}$ if and only if $a_{l}=O(b_{l})$
and $b_{l}=O(a_{l}).$

First, we consider the variance of the random coefficients, which can be
represented by:%
\begin{equation*}
\sum (\frac{l(l+1)}{B^{j_{1}}})^{4p}e^{-2l(l+1)/B^{2j_{1}}}(2l+1)C_{l}%
\approx B^{\left( 2-\alpha \right) j_{1}}\left\{ \left(
\int_{B^{-j_{1}}}^{1}+\int_{1}^{\infty }\right) t^{4p+1-\alpha
}e^{-2t(t+B^{-j_{1}})}g_{j_{1}}\left( t\right) dt\right\}
\end{equation*}%
\begin{equation*}
=B^{\left( 2-\alpha \right) j_{1}}\left\{ \frac{c_{0}}{4p+2-\alpha }%
B^{-j_{1}\left( 4p+2-\alpha \right) }+O\left( 1\right) \right\} \text{ };
\end{equation*}%
therefore%
\begin{eqnarray*}
&&\left\{ \underset{l\geq 1}{\sum }(\frac{l(l+1)}{B^{j_{1}}}%
)^{4p}e^{-2l(l+1)/B^{2j_{1}}}(2l+1)C_{l}\right\} ^{1/2}\left\{ \underset{%
l\geq 1}{\sum }(\frac{l(l+1)}{B^{j_{2}}}%
)^{4p}e^{-2l(l+1)/B^{2j_{2}}}(2l+1)C_{l}\right\} ^{1/2} \\
&=&B^{\left( 1-\alpha /2\right) \left( j_{1}+j_{2}\right) }\left\{ \frac{%
c_{0}}{4p+2-\alpha }B^{-j_{1}\left( 4p+2-\alpha \right) }+O\left( 1\right)
\right\} ^{1/2}\left\{ \frac{c_{0}}{4p+2-\alpha }B^{j_{2}\left( 4p+2-\alpha
\right) }+O\left( 1\right) \right\} ^{1/2} \\
&=&O\left( 1\right) B^{\left( 1-\alpha /2\right) \left( j_{1}+j_{2}\right) }%
\text{ .}
\end{eqnarray*}%
Without loss of generality, we can always assume $j_{1}<j_{2}.$ We can
implement the same argument as before, provided we replace $C_{B,g_{j}}$ in (%
\ref{cbj}) by 
\begin{eqnarray*}
C_{B,g,j_{1},j_{2}} &=&\underset{l\geq 1}{\sum }(\frac{l\left( l+1\right) }{%
B^{j_{1}+j_{2}}})^{2p}e^{-l(l+1)(B^{-2j_{1}}+B^{-2j_{2}})}\frac{2l+1}{4\pi }%
l^{-\alpha }g_{j_{1}}\left( \frac{l}{B^{j_{1}}}\right) \sin \left( l+\frac{1%
}{2}\right) \varphi \\
&=&:\frac{1}{2i}\underset{l\geq 1}{\sum }\left(
h_{j_{1}j_{2}+}(l)-h_{j_{1}j_{2}-}\left( l\right) \right)
\end{eqnarray*}%
where%
\begin{equation*}
h_{j_{1}j_{2}\pm }(u)=(\frac{u\left( u+1\right) }{B^{j_{1}+j_{2}}})^{2p}%
\frac{2u+1}{4\pi }u^{-\alpha }g_{j_{1}}\left( \frac{u}{B^{j_{1}}}\right)
e^{-u(u+1)(B^{-2j_{1}}+B^{-2j_{2}})\pm i(u+\frac{1}{2})\varphi }.
\end{equation*}%
Again, by Poisson summation formula, we have%
\begin{equation*}
\underset{l\geq 1}{\sum }h_{j_{1}j_{2}\pm }(l)=\frac{1}{2}\underset{l\in 
\mathbb{Z}
}{\sum }h_{j_{1}j_{2}\pm }(l)=\frac{1}{2}\underset{\mu \in 
\mathbb{Z}
}{\sum }\widehat{h}_{j_{1}j_{2}\pm }\left( 2\pi \mu \right) \text{ .}
\end{equation*}%
Denote%
\begin{equation*}
G_{\alpha ,j_{1}j_{2}}\left( t\right) :=t^{2p-\alpha }g_{j_{1}}\left(
t\right) e^{-t\left( t+B^{-j_{1}}\right) \left( 1+B^{2\left(
j_{1}-j_{2}\right) }\right) }I_{\left( B^{-j_{2}},\infty \right) }\text{ ;}
\end{equation*}%
by the same argument and notation as in (\ref{newproof}), we have%
\begin{eqnarray*}
&&\left| \widehat{h}_{j_{1}j_{2}\pm }\left( 2\pi \mu \right) \right| \\
&=&\!\frac{B^{2p\left( j_{1}-j_{2}\right) +\left( 2-\alpha \right) j_{1}}}{%
4\pi }\left\{ \sum_{m=1}^{2p}[2\binom{2p}{m-1}+\binom{2p}{m}]B^{-\left(
2p+1-m\right) j_{1}}\frac{d^{m}}{d\omega ^{m}}\right\} \widehat{G}_{\alpha
,j_{1}j_{2}}\left( \omega \right) \left| _{\omega =B^{j_{1}}\left( 2\pi \mu
\mp \varphi \right) }\right. \\
&&+\frac{B^{2p\left( j_{1}-j_{2}\right) +\left( 2-\alpha \right)
j_{1}}e^{\pm i\varphi }}{4\pi }\left\{ B^{-\left( 2p+1\right) j_{1}}+2\frac{%
d^{2p+1}}{d\omega ^{2p+1}}\right\} \widehat{G}_{\alpha ,j_{1}j_{2}}\left(
\omega \right) \left| _{\omega =B^{j_{1}}\left( 2\pi \mu \mp \varphi \right)
}\right. \! \\
&\leq &\frac{2^{2p}C_{\alpha ,g}B^{-2p\left( j_{1}+j_{2}\right) +\log
_{B}j_{1}}}{\left( 2\pi \mu \mp \varphi \right) ^{4p+2-\alpha }},\text{ }\mu
=1,2,...
\end{eqnarray*}%
Therefore 
\begin{equation*}
\left| C_{B,g_{j_{1}},j_{2}}\right| \leq 2^{2p}C_{\alpha ,g}\left( \frac{1}{%
2\varphi ^{4p+2-\alpha }}+\left| 4p+1-\alpha \right| \pi ^{\alpha
-4p-1}\right) B^{-2p\left( j_{1}+j_{2}\right) +\log _{B}j_{1}}.
\end{equation*}%
It is then straightforward to conclude as in the case where $j_{1}=j_{2},$
to obtain%
\begin{eqnarray*}
Corr\left( \beta _{j_{1},k},\beta _{j_{2},k^{\prime }}\right) &\leq &C\frac{%
\theta ^{\alpha -4p-2}B^{-2p\left( j_{1}+j_{2}\right) +\log _{B}j_{1}}}{%
B^{\left( 1-\alpha /2\right) \left( j_{1}+j_{2}\right) }} \\
&\leq &C\theta ^{\alpha -4p-2}B^{-\left[ \left( j_{1}+j_{2}\right) /2-\log
_{B}\left( j_{1}+j_{2}\right) /2\right] \left( 4p+2-\alpha \right)
}\rightarrow 0,\text{ as }j_{2}\rightarrow \infty \text{ }.
\end{eqnarray*}%
Thus (\ref{3.1}) is established.
\end{proof}

\begin{remark}
By careful manipulation, we can obtain an explicit expression for the
constant $C_{M}$ in (\ref{3.1}), i.e. 
\begin{equation*}
C_{M}=2^{2p}\pi ^{M+1}\left( 4p+2-\alpha \right) ^{2}\Gamma \left(
4p+1-\alpha \right) c_{0}C_{g}\log B.\text{ }
\end{equation*}
\end{remark}

The previous result shows that Mexican needlets can enjoy the same
uncorrelation properties as standard needlets, in the circumstances where
the angular power spectrum is decaying ``slowly enough''. The extra log term
in (\ref{3.1}) is a consequence of a standard technical difficulty when
dealing with a boundary case in the integral in (\ref{intlog}).

\ \newline

\begin{remark}
To obtain central limit results for finite-dimensional statistics based on
nonlinear transformations of the Mexican needlets coefficients, it would be
sufficient to consider the case where $j=j^{\prime }.$ The asymptotic
uncorrelation we established in Theorem $\ref{uno}$ is stronger; indeed, for
many applications it is useful to consider different scales $\left\{
j\right\} $ at the same time. Because of this, it is also important to focus
on the correlation of Mexican needlet coefficients at different $j,j^{\prime
}.$ We stress that the need for such analysis was much more limited for
standard needlets; indeed in the latter case, given the compactly supported
kernel $b(.),$ the frequency support of the various coefficients is
automatically disjoint when $\left| j-j^{\prime }\right| \geq 2$, whence
(for completely observed random fields) standard needlet coefficients can be
correlated only for $\left| j-j^{\prime }\right| =1$.
\end{remark}

\section{Stochastic properties of Mexican needlet coefficients, II: lower
bounds}

In this Section, we complete the previous analysis, establishing indeed that
the random Mexican needlets coefficients are necessarily correlated at some
angular distance in the presence of faster memory decay. This is clearly
different from needlets, which are always uncorrelated. As mentioned in the
Introduction, the heuristic rationale behind this duality can be explained
as follows: it should be stressed that we are focussing on high-resolution
asymptotics, i.e. the asymptotic behaviour of random coefficients at smaller
and smaller scales in the same random realization. For such asymptotics, a
crucial role can be played by terms which remain constant across different
scales. In the case of usual needlets, which have bounded support over the
multipoles, terms like these are simply dropped by construction. This is not
so for Mexican needlets, which in any case include components at the lowest
scales. These components are dominant when the angular power spectrum decays
fast, and as such they prevent the possibility of asymptotic uncorrelation.
In particular, we have correlation when the angular power spectrum is such
that $\alpha >4p+2.$

\begin{theorem}
\label{due}Under condition \ref{cdA}, for $\alpha >4p+2$, $\forall $ $%
\varepsilon \in (0,1),$ there exists a positive $\delta \leq \varepsilon
\left( 1+c_{0}^{2}\right) ^{-1/\left( \alpha -4p-2\right) }$ such that 
\begin{equation}
\lim_{j\rightarrow \infty }\inf Corr\left( \beta _{jk;p},\beta _{jk^{\prime
};p}\right) >1-\varepsilon \text{ ,}  \label{3.2}
\end{equation}%
for all $\left\{ \xi _{jk},\xi _{jk^{\prime }}\right\} $ such that $d(\xi
_{jk},\xi _{jk^{\prime }})\leq \delta $ .
\end{theorem}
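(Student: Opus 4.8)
The plan is to make precise the heuristic stated before the theorem: when $\alpha>4p+2$ the unnormalised variance of $\beta_{jk;p}$ is dominated by its lowest multipoles, which are frozen across scales, and these terms pin $Corr$ near $1$ once the two points are close. Starting from (\ref{cor2}), set $a_{j,l}:=(l(l+1))^{2p}\tfrac{2l+1}{4\pi}e^{-2l(l+1)/B^{2j}}C_{l}\ge 0$, so that with $\theta:=d(\xi_{jk},\xi_{jk^{\prime}})$
$$Corr\left(\beta_{jk;p},\beta_{jk^{\prime};p}\right)=\frac{\sum_{l\ge 1}a_{j,l}P_{l}(\cos\theta)}{\sum_{l\ge 1}a_{j,l}}=1-\frac{\sum_{l\ge 1}a_{j,l}\bigl(1-P_{l}(\cos\theta)\bigr)}{\sum_{l\ge 1}a_{j,l}}.$$
Condition \ref{cdA} gives $c_{0}^{-1}l^{-\alpha}\le C_{l}\le c_{0}l^{-\alpha}$ (for all $l$, treating the single term $l=1$ as a fixed positive constant), and since $\alpha>4p+2$ the series $S:=\sum_{l\ge1}(l(l+1))^{2p}\tfrac{2l+1}{4\pi}l^{-\alpha}$ converges. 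As $e^{-2l(l+1)/B^{2j}}\uparrow 1$ for each $l$, monotone convergence yields $\sum_{l\ge1}a_{j,l}\to D_{\infty}:=\sum_{l\ge1}(l(l+1))^{2p}\tfrac{2l+1}{4\pi}C_{l}$, with $0<c_{0}^{-1}S\le D_{\infty}\le c_{0}S<\infty$. This is the conceptual crux and is precisely where $\alpha>4p+2$ enters: contrary to Theorem \ref{uno}, the normalised variance does not vanish but tends to a strictly positive constant determined by the large-scale behaviour of the field.

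To control the numerator defect I would split the sum at a truncation level $L$. For $l\le L$ use the standard small-angle bound $0\le 1-P_{l}(\cos\theta)\le\tfrac14 l(l+1)\theta^{2}$, so this part is at most $\tfrac14 L(L+1)\theta^{2}\sum_{l\le L}a_{j,l}\le\tfrac14 L(L+1)\delta^{2}\sum_{l}a_{j,l}$ whenever $\theta\le\delta$. For $l>L$ use $1-P_{l}(\cos\theta)\le 2$ together with $e^{-2l(l+1)/B^{2j}}\le 1$ and $C_{l}\le c_{0}l^{-\alpha}$, bounding this part by $2c_{0}R_{L}$ with $R_{L}:=\sum_{l>L}(l(l+1))^{2p}\tfrac{2l+1}{4\pi}l^{-\alpha}\to 0$ (again because $\alpha>4p+2$). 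Both estimates are uniform in $j$ and in $\theta\in[0,\delta]$; dividing by $\sum_{l}a_{j,l}$ and letting $j\to\infty$, using $\sum_{l}a_{j,l}\to D_{\infty}\ge c_{0}^{-1}S$, one obtains, uniformly over all admissible pairs $\{\xi_{jk},\xi_{jk^{\prime}}\}$ with $d(\xi_{jk},\xi_{jk^{\prime}})\le\delta$,
$$\liminf_{j\to\infty}Corr\left(\beta_{jk;p},\beta_{jk^{\prime};p}\right)\ge 1-\frac{L(L+1)\delta^{2}}{4}-\frac{2c_{0}^{2}R_{L}}{S}.$$

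The proof then concludes by optimising: given $\varepsilon\in(0,1)$, first pick $L$ so large that $2c_{0}^{2}R_{L}/S<\varepsilon/2$, then $\delta>0$ so small that $L(L+1)\delta^{2}/4<\varepsilon/2$, and finally replace $\delta$ by $\min\{\delta,\,\varepsilon(1+c_{0}^{2})^{-1/(\alpha-4p-2)}\}$; this gives a positive $\delta$ in the claimed range for which $\liminf_{j}Corr>1-\varepsilon$. To obtain the explicit value $\varepsilon(1+c_{0}^{2})^{-1/(\alpha-4p-2)}$ more intrinsically one carries the constants through, choosing the cutoff $L$ proportional to $1/\theta$ and using the sharper tail estimates $\sum_{l\le L}a_{j,l}\,l(l+1)\lesssim c_{0}L^{(4p+4-\alpha)_{+}}$ and $\sum_{l>L}a_{j,l}\lesssim c_{0}L^{-(\alpha-4p-2)}$, which produce a bound of the form $1-Corr\lesssim c_{0}^{2}\,\theta^{\alpha-4p-2}/S$; the exponent $1/(\alpha-4p-2)$ and the factor $(1+c_{0}^{2})$ then surface when this is solved for $\theta$. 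I expect this bookkeeping — together with keeping every estimate uniform in $j$ and in the varying angle $\theta_{j}\le\delta$ — to be the only real labour; the genuine obstacle is the observation in the first step, namely that a fast-decaying spectrum makes the normalised variance converge to the strictly positive constant $D_{\infty}$ carried by the lowest multipoles, so that the Cosmic Variance contribution is never averaged away and asymptotic uncorrelation necessarily fails, in contrast with Theorem \ref{uno} and the NPW needlets.
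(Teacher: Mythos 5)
Your proposal is correct and follows essentially the same route as the paper: both arguments rest on the fact that for $\alpha>4p+2$ the series $\sum_{l}l^{4p+1-\alpha}$ converges, so the (normalised) variance is carried by a fixed finite range of low multipoles on which $P_{l}(\cos\theta)$ is uniformly close to $1$ for $\theta\leq\delta$, while the tail beyond a cutoff contributes an arbitrarily small fraction, and the stated bound on $\delta$ is then secured simply by shrinking it. Your packaging via monotone convergence to the positive limit $D_{\infty}$ with a single cutoff $L$ is a somewhat cleaner rendering of the paper's three-way split $A_{1j}+A_{2j}+A_{3j}$ with $\epsilon_{1}=NB^{-j}$, but the mechanism is identical.
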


\begin{proof}
We first divide the variance of the coefficients into three parts, as follows%
\begin{eqnarray*}
&&\left( \underset{1\leq l<\epsilon _{1}B^{j}}{\sum }+\underset{\epsilon
_{1}B^{j}\leq l<\epsilon _{2}B^{j}}{\sum }+\underset{l\geq \epsilon _{2}B^{j}%
}{\sum }\right) \left( \frac{l\left( l+1\right) }{B^{2j}}\right)
^{2p}e^{-2l\left( l+1\right) /B^{2j}}\frac{2l+1}{4\pi }C_{l} \\
&=&:A_{1j}+A_{2j}+A_{3j}\text{ }.
\end{eqnarray*}%
Intuitively, our idea is to show that the first sum is of exact order $%
O(B^{\left( 2-\alpha \right) j}\times B^{\left( \alpha -4p-2\right)
j})=O(B^{-4pj}),$ while the second two are smaller ($O(B^{\left( 2-\alpha
\right) j})$ and $o(B^{\left( 2-\alpha \right) j})$, respectively). Indeed,
for the first part we obtain easily%
\begin{eqnarray*}
A_{1j} &=&\underset{1\leq l<\epsilon _{1}B^{j}}{\sum }\left( \frac{l\left(
l+1\right) }{B^{2j}}\right) ^{2p}e^{-2l\left( l-1\right) /B^{2j}}\frac{2l+1}{%
4\pi }C_{l}\leq \underset{1\leq l<\epsilon _{1}B^{j}}{2\sum }\frac{l^{4p}}{%
B^{4pj}}\frac{l}{\pi }l^{-\alpha }g_{j}(\frac{l}{B^{j}}) \\
&\leq &2\frac{c_{0}}{\pi }B^{\left( 2-\alpha \right)
j}\int_{B^{-j}}^{\epsilon _{1}}x^{4p+1-\alpha }dx=2\frac{c_{0}\left(
B^{\left( \alpha -4p-2\right) j}-\epsilon _{1}^{4p+2-\alpha }\right) }{\pi
\left( \alpha -4p-2\right) }B^{\left( 2-\alpha \right) j},
\end{eqnarray*}%
and%
\begin{equation*}
\underset{1\leq l<\epsilon _{1}B^{j}}{\sum }\left( \frac{l\left( l+1\right) 
}{B^{2j}}\right) ^{2p}e^{-2l\left( l+1\right) /B^{2j}}\frac{2l+1}{4\pi }%
C_{l}\geq \frac{\left( B^{\left( \alpha -4p-2\right) j}-\epsilon
_{1}^{4p+2-\alpha }\right) }{2\pi c_{0}\left( \alpha -4p-2\right) }%
e^{-2\epsilon _{1}^{2}}B^{\left( 2-\alpha \right) j}.
\end{equation*}%
Similarly, for the second part%
\begin{eqnarray*}
\frac{\left( \epsilon _{2}^{4p+2-\alpha }-\epsilon _{1}^{4p+2-\alpha
}\right) }{2\pi \left( \alpha -4p-2\right) c_{0}}e^{-2\epsilon
_{2}^{2}}B^{\left( 2-\alpha \right) j} &\leq &\underset{\epsilon
_{1}B^{j}\leq l<\epsilon _{2}B^{j}}{\sum }\left( \frac{l\left( l+1\right) }{%
B^{2j}}\right) ^{2p}e^{-2l\left( l+1\right) /B^{2j}}\frac{2l+1}{4\pi }C_{l}
\\
&\leq &2\frac{\left( \epsilon _{1}^{4p+2-\alpha }-\epsilon _{2}^{4p+2-\alpha
}\right) }{\pi \left( \alpha -4p-2\right) }c_{0}e^{-2\epsilon
_{1}^{2}}B^{\left( 2-\alpha \right) j}\text{ ,}
\end{eqnarray*}%
and for the third part%
\begin{eqnarray*}
&&\underset{l\geq \epsilon _{2}B^{j}}{\sum }\left( \frac{l\left( l+1\right) 
}{B^{2j}}\right) ^{2p}e^{-2l\left( l+1\right) /B^{2j}}\frac{2l+1}{4\pi }%
C_{l}\leq \frac{3c_{0}}{4\pi }B^{\left( 2-\alpha \right) j}\int_{\epsilon
_{2}}^{\infty }x^{4p+1-\alpha }e^{-x^{2}}dx \\
&\leq &\frac{3c_{0}}{4\pi }B^{\left( 2-\alpha \right) j}\epsilon
_{2}^{4p+1-\alpha }\int_{\epsilon _{2}}^{\infty }e^{-x^{2}}dx\leq \frac{%
3c_{0}}{4\pi }\epsilon _{2}^{4p-\alpha }e^{-\epsilon _{2}^{2}}B^{\left(
2-\alpha \right) j},\text{ }\left( \epsilon _{2}>1\right) \text{ }.
\end{eqnarray*}%
The last inequality follows from the asymptotic formula%
\begin{equation*}
\int_{y}^{\infty }e^{-x^{2}/2}dx\thicksim \frac{1}{y}e^{-y^{2}/2},\text{ \ }%
y\rightarrow \infty \text{ }.
\end{equation*}%
We have then that%
\begin{eqnarray*}
\frac{A_{3j}}{A_{1j}} &=&\left\{ \underset{1\leq l<\epsilon _{1}B^{j}}{\sum }%
\left( \frac{l\left( l+1\right) }{B^{2j}}\right) ^{2p}e^{-2l(l+1)/B^{2j}}%
\frac{2l+1}{4\pi }C_{l}\right\} ^{-1} \\
&&\times \left\{ \underset{l\geq \epsilon _{2}B^{j}}{\sum }\left( \frac{%
l\left( l+1\right) }{B^{2j}}\right) ^{2p}e^{-2l(l+1)/B^{2j}}\frac{2l+1}{4\pi 
}C_{l}\right\} 
\end{eqnarray*}%
\begin{equation*}
=O(B^{j(4p+2-\alpha )})=o(1)\text{ },\text{ }as\text{ }j\rightarrow \infty 
\text{ .}
\end{equation*}%
On the other hand, for any positive $\varepsilon <1,$ if we choose $\epsilon
_{1}=NB^{-j},$ where $N$ is sufficiently large that $N^{4p+2-\alpha }\left(
1+\frac{2}{\varepsilon }c_{0}^{2}\right) <1,$ we obtain that 
\begin{equation*}
\frac{\left( B^{\left( \alpha -4p-2\right) j}-\epsilon _{1}^{4p+2-\alpha
}\right) }{c_{0}}>\frac{2}{\varepsilon }\left( \epsilon _{1}^{4p+2-\alpha
}-\epsilon _{2}^{4p+2-\alpha }\right) c_{0}\text{ },\text{ }
\end{equation*}%
whence 
\begin{equation*}
\frac{A_{2j}}{A_{1j}}<\frac{\varepsilon }{2}\text{ as }j\rightarrow \infty 
\text{ }.
\end{equation*}%
Thus we obtain that%
\begin{eqnarray*}
\left\{ \underset{l\geq 1}{\sum }\left( \frac{l\left( l+1\right) }{B^{2j}}%
\right) ^{2p}e^{-2l(l+1)/B^{2j}}\frac{2l+1}{4\pi }C_{l}\right\} ^{-1} &=&%
\frac{1}{A_{1j}}\left\{ 1+\frac{A_{2j}}{A_{1j}}+\frac{A_{3j}}{A_{1j}}%
\right\} ^{-1} \\
&\geq &\frac{1}{A_{1j}}\left\{ 1+\frac{\varepsilon }{2}+o(1)\right\} ^{-1} \\
&\geq &cB^{-4pj},\text{ some }c>0\text{ .}
\end{eqnarray*}%
More explicitly, the variance at the denominator has the same order as for
the summation restricted to the elements in the range $1\leq l<\epsilon
_{1}B^{j}.$

To analyze the numerator, we start by recalling that%
\begin{equation*}
\sup_{\theta \in \lbrack 0,\pi ]}P_{l}(\cos \theta )=P_{l}(\cos 0)=1\text{ },%
\text{ and }\sup_{\theta \in \lbrack 0,\pi ]}|\frac{d}{d\theta }P_{l}(\cos
\theta )|\leq 3l\text{ }.
\end{equation*}%
As a consequence, for any $\varepsilon >0$ there exists a $\delta >0,$ s.t.
if $0<\theta \leq \delta \leq \varepsilon /6N,$ then,%
\begin{equation*}
\left| P_{l}(\cos \theta )-P_{l}(\cos 0)\right| \leq 3l\theta \leq
\varepsilon \text{ ,}
\end{equation*}%
for all $l>N,$ where the above inequalities follow from 
\begin{equation*}
0\leq \cos \left( \theta _{0}+\theta \right) -\cos \theta _{0}=2\sin ^{2}%
\frac{\theta }{2}\leq \theta \text{ , }\forall \text{ }\theta _{0}\in
\lbrack 0,2\pi )\text{ .}
\end{equation*}%
Therefore, for any $\xi _{jk},\xi _{jk^{\prime }}\in S^{2}$ s.t. $\arccos
\left\langle \xi _{jk},\xi _{jk^{\prime }}\right\rangle \leq \delta ,$ we
have 
\begin{equation*}
Corr\left( \beta _{jk},\beta _{jk^{\prime }}\right) =\frac{\underset{l\geq 1}%
{\sum }\left( \frac{l\left( l+1\right) }{B^{2j}}\right)
^{2p}e^{-2l(l+1)/B^{2j}}\frac{2l+1}{4\pi }C_{l}P_{l}\left( \left\langle \xi
_{jk},\xi _{jk^{\prime }}\right\rangle \right) }{\underset{l\geq 1}{\sum }%
\left( \frac{l\left( l+1\right) }{B^{2j}}\right) ^{2p}e^{-2l(l+1)/B^{2j}}%
\frac{2l+1}{4\pi }C_{l}}
\end{equation*}%
\begin{eqnarray*}
&\geq &\frac{\underset{1\leq l\leq N}{\sum }\left( \frac{l\left( l+1\right) 
}{B^{2j}}\right) ^{2p}e^{-2l(l+1)/B^{2j}}C_{l}P_{l}(\cos 0)}{\underset{l\geq
1}{\sum }\left( \frac{l\left( l+1\right) }{B^{2j}}\right)
^{2p}e^{-2l(l+1)/B^{2j}}\frac{2l+1}{4\pi }C_{l}} \\
&&-\frac{\underset{1\leq l\leq N}{\sum }\left( \frac{l\left( l+1\right) }{%
B^{2j}}\right) ^{2p}e^{-2l(l+1)/B^{2j}}\frac{2l+1}{4\pi }C_{l}\left|
P_{l}\left( \left\langle \xi _{jk},\xi _{jk^{\prime }}\right\rangle \right)
-P_{l}(\cos 0)\right| }{\underset{l\geq 1}{\sum }\left( \frac{l\left(
l+1\right) }{B^{2j}}\right) ^{2p}e^{-2l(l+1)/B^{2j}}\frac{2l+1}{4\pi }C_{l}}
\\
&&+\frac{\underset{l>N}{\sum }\left( \frac{l\left( l+1\right) }{B^{2j}}%
\right) ^{2p}e^{-2l(l+1)/B^{2j}}C_{l}\left| P_{l}\left( \left\langle \xi
_{jk},\xi _{jk^{\prime }}\right\rangle \right) \right| }{\underset{l\geq 1}{%
\sum }\left( \frac{l\left( l+1\right) }{B^{2j}}\right)
^{2p}e^{-2l(l+1)/B^{2j}}\frac{2l+1}{4\pi }C_{l}}
\end{eqnarray*}%
\begin{eqnarray*}
&\geq &\frac{\underset{1\leq l\leq N}{\sum }\left( \frac{l\left( l+1\right) 
}{B^{2j}}\right) ^{2p}e^{-2l(l+1)/B^{2j}}\frac{2l+1}{4\pi }C_{l}\times
(1-\varepsilon )}{\underset{l\geq 1}{\sum }\left( \frac{l\left( l+1\right) }{%
B^{2j}}\right) ^{2p}e^{-2l(l+1)/B^{2j}}\frac{2l+1}{4\pi }C_{l}}%
+O(B^{j(4p+2-\alpha )}) \\
&=&\frac{(1-\varepsilon )}{1+\varepsilon /2}+o\left( 1\right)
=(1-\varepsilon ^{\prime })+o\left( 1\right) \text{ },\text{ as }%
j\rightarrow \infty \text{ },\text{ }\varepsilon ^{\prime }=\frac{%
3\varepsilon }{2+\varepsilon }\text{ .}
\end{eqnarray*}%
Thus the proof is completed.
\end{proof}

\ 

As mentioned earlier, the results in the previous two Theorems illustrate an
interesting trade-off between the localization and correlation properties of
spherical needlets. In particular, we can always achieve uncorrelation by
choosing $p>(\alpha -2)/4$; of course $\alpha $ is generally unknown and
must be estimated from the data (in this sense standard needlets have better
robustness properties). Introducing higher order terms implies lowering the
weight of the lowest multipoles, i.e. improving the localization properties
in frequency space. On the other hand, it may be expected that such an
improvement of the localization properties in the frequency domain will lead
to a worsening of the localization in pixel space, as a consequence of the
Uncertainty Principle we mentioned in the Introduction (see for instance %
\cite{havin}). We do not investigate this issue here, but we shall provide
some numerical evidence on this phenomenon in an ongoing work.

\ \newline

\begin{remark}
In Theorem $\ref{due}$, we decided to keep the assumptions as close as
possible to Theorem \ref{uno}, in order to ease comparisons and highlight
the symmetry between the two results. However, it is simple to show that the
correlation result holds in much greater generality, for angular power
spectra that have a decay which is faster than polynomial. In particular,
assume that%
\begin{equation*}
C_{l}=H(l)\exp (-l^{p})\text{ , }l=1,2,...
\end{equation*}%
where $H(l)$ is any kind of polynomial such that $H(l)>c>0$ and $p>0.$ Then
it is simple to establish the same result as in Theorem \ref{due}, by means
of a simplified version of the same argument. The underlying rationale
should be easy to get: for exponentially decaying power spectra the
dominating components are at the lowest frequencies, and they introduce
correlations among all random coefficients which cannot be neglected.
\end{remark}

\section{Statistical Applications}

The previous results lend themselves to several applications for the
statistical analysis of spherical random fields, in particular with a view
to CMB\ data analysis. Similarly to \cite{bkmpb}, let us consider
polynomials functions of the normalized Mexican needlets coefficients, as
follows%
\begin{equation*}
h_{u,N_{j}}:=\frac{1}{\sqrt{N_{j}}}\sum_{k=1}^{N_{j}}%
\sum_{q=1}^{Q}w_{uq}H_{q}(\widehat{\beta }_{jk;p})\text{ , }\widehat{\beta }%
_{jk;p}:=\frac{\beta _{jk;p}}{\sqrt{E\beta _{jk;p}^{2}}}\text{ , }u=1,...,U%
\text{ ,}
\end{equation*}%
where $H_{q}(.)$ denotes the $q$-th order Hermite polynomials (see \cite{s}%
), $N_{j}$ is the cardinality of coefficients corresponding to frequency $j$
(where we take $\left\{ \xi _{jk}\right\} $ to form a $B^{-j}$-mesh, see %
\cite{bkmp}, so that $N_{j}\approx B^{2j})$, and $\left\{ w_{uq}\right\} $
is a set of deterministic weights that must ensure these statistics are
asymptotically nondegenerate, i.e.

\negthinspace\ \newline

\begin{condition}
\label{invert} There exist $j_{0}$ such that for all $j>j_{0}$%
\begin{equation*}
rank(\Omega _{j})=U\text{ , }\Omega _{j}:=Eh_{N_{j}}h_{N_{j}}^{\prime }\text{
, }h_{N_{j}}:=(h_{1,N_{j}},...,h_{U,N_{j}})^{\prime }\text{ .}
\end{equation*}
\end{condition}

\ 

Condition ($\ref{invert})$ is a standard invertibility assumption which will
ensure our statistics are asymptotically nondegenerate (for instance, it
rules out multicollinearity). Several examples of relevant polynomials are
given in \cite{bkmpb}; for instance, given a theoretical model for the
angular power spectrum $\left\{ C_{l}\right\} ,$ it is suggested in that
reference that a goodness-of-fit statistic might be based upon%
\begin{eqnarray*}
\frac{1}{\sqrt{N_{j}}}\sum_{k=1}^{N_{j}}H_{2}(\widehat{\beta }_{jk;p}) &=&%
\frac{1}{\sqrt{N_{j}}}\sum_{k=1}^{N_{j}}(\widehat{\beta }_{jk;p}^{2}-1) \\
&=&\frac{1}{\sqrt{N_{j}}}\sum_{k=1}^{N_{j}}(\frac{\beta _{jk;p}^{2}}{\lambda
_{jk}\underset{l\geq 1}{\sum }b^{2}(\frac{l}{B^{j}})\frac{2l+1}{4\pi }C_{l}}%
-1)\text{ .}
\end{eqnarray*}%
The statistic%
\begin{equation*}
\widehat{\Gamma }_{j}=\frac{1}{N_{j}}\sum_{k=1}^{N_{j}}\frac{\beta
_{jk;p}^{2}}{\lambda _{jk}}
\end{equation*}%
can then be viewed as an unbiased estimator for 
\begin{equation*}
\Gamma _{j}=E\widehat{\Gamma }_{j}=\underset{l\geq 1}{\sum }b^{2}(\frac{l}{%
B^{j}})\frac{2l+1}{4\pi }C_{l}\text{ .}
\end{equation*}%
We refer to \cite{fay08}, \cite{dela08} for the analysis of this estimator
in the presence of missing observations and noise, and for its application
to CMB data in the standard needlets case. Our results below can be viewed
as providing consistency and asymptotic Gaussianity (for fully observed maps
and without noise) in the Mexican needlets approach. As always in this
framework, consistency has a non-standard meaning, as we do not have
convergence to a fixed parameter, but rather convergence to unity of the
ratio $\widehat{\Gamma }_{j}/\Gamma _{j}$.

Likewise, tests of Gaussianity could be implemented by focussing on the
skewness and kurtosis of the wavelets coefficients (see for instance \cite%
{jfaa2}), i.e. by focussing on%
\begin{eqnarray*}
\frac{1}{\sqrt{N_{j}}}\sum_{k=1}^{N_{j}}\left\{ H_{3}(\widehat{\beta }%
_{jk;p})+H_{1}(\widehat{\beta }_{jk;p})\right\} &=&\frac{1}{\sqrt{N_{j}}}%
\sum_{k=1}^{N_{j}}\widehat{\beta }_{jk;p}^{3}\text{ and } \\
\frac{1}{\sqrt{N_{j}}}\sum_{k=1}^{N_{j}}\left\{ H_{4}(\widehat{\beta }%
_{jk;p})+6H_{2}(\widehat{\beta }_{jk;p})\right\} &=&\frac{1}{\sqrt{N_{j}}}%
\sum_{k=1}^{N_{j}}\left\{ \widehat{\beta }_{jk;p}^{4}-3\right\} \text{ .}
\end{eqnarray*}%
The joint distribution for these statistics is provided by the following
results:

\ \newline

\begin{theorem}
\label{statapp} Assume $T$ is a Gaussian mean square continuous and
isotropic random field; assume also that Conditions \ref{cdA}, \ref{invert}
are satisfied and choose $p>(\alpha +\delta )/4,$ some $\delta >0.$ Then as $%
N_{j}\rightarrow \infty $%
\begin{equation*}
\Omega _{j}^{-1/2}h_{N_{j}}\rightarrow _{d}N(0,I_{U})\text{ .}
\end{equation*}
\end{theorem}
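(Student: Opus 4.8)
The plan is to reduce the problem to a central limit theorem for nonlinear functionals of a Gaussian field via the standard two-step strategy: first establish finite-dimensional Gaussian convergence for linear combinations, then conclude with the Cramér--Wold device. The key structural fact we exploit is that each $\beta_{jk;p}$ is a linear functional of the Gaussian field $T$, hence jointly Gaussian, so that $H_q(\widehat\beta_{jk;p})$ lives in the $q$-th Wiener chaos associated with the Gaussian measure generated by $\{a_{lm}\}$. Consequently each component $h_{u,N_j}$ is a finite sum of elements from the first $Q$ chaoses, and $\Omega_j^{-1/2}h_{N_j}$ is a vector of (normalized) chaotic random variables. The route I would take is to invoke the fourth-moment / Nualart--Peccati type criterion (together with its multidimensional extension due to Peccati--Tudor): it suffices to show that, chaos by chaos, the fourth cumulant of each normalized component tends to zero, and that the covariance matrix $\Omega_j$ converges (after suitable normalization, or simply stays bounded away from singularity by Condition~\ref{invert}) so that the Gaussian limits in each chaos combine into a single Gaussian vector.

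The heart of the argument is the fourth-cumulant estimate, and this is where Theorem~\ref{uno} enters decisively. For a single Hermite rank $q$, the fourth cumulant of $\frac{1}{\sqrt{N_j}}\sum_k H_q(\widehat\beta_{jk;p})$ is controlled by a sum of the form $\frac{1}{N_j^2}\sum_{k_1,k_2,k_3,k_4} \rho_j(k_1,k_2)\rho_j(k_2,k_3)\rho_j(k_3,k_4)\rho_j(k_4,k_1)$, where $\rho_j(k,k') = Corr(\beta_{jk;p},\beta_{jk';p})$. By Theorem~\ref{uno} with $j_1=j_2=j$, under the choice $p>(\alpha+\delta)/4$ we have $4p+2-\alpha > \delta > 2$, so $|\rho_j(k,k')| \le C_M (1 + B^{j-\log_B j} d(\xi_{jk},\xi_{jk'}))^{-(4p+2-\alpha)}$ with exponent strictly exceeding $2$ (in fact we may take $M$ and hence the exponent as large as we like). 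Since the $\{\xi_{jk}\}$ form a $B^{-j}$-mesh with $N_j \approx B^{2j}$, a standard counting argument over spherical caps shows that $\sum_{k'} |\rho_j(k,k')|^s$ is uniformly bounded whenever $s\cdot(4p+2-\alpha) > 2$, i.e. for all $s\ge 1$ here; combining this with a Hölder/cyclic-sum bookkeeping argument gives that the quartic sum above is $O(1/N_j)\to 0$. The same estimates control all lower cumulants and cross-terms, and show $\Omega_j$ has entries of the right order so that Condition~\ref{invert} makes $\Omega_j^{-1/2}$ well-behaved.

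The main obstacle, and the step requiring genuine care, is the \emph{multidimensional} aspect: one must check not only that each scalar chaotic component is asymptotically Gaussian but that the whole vector converges jointly, including across different Hermite orders $q$ and different weight indices $u$. Components from different chaoses are automatically uncorrelated, but joint Gaussianity of the limit still needs the Peccati--Tudor theorem, which demands (i) convergence of the full covariance structure and (ii) the fourth-moment condition within each chaos --- both of which follow from the correlation decay above, but the covariance convergence requires showing $\Omega_j$ (or $N_j^{-1}$ times the relevant Gram matrix, depending on normalization) converges, or at least that along subsequences its limits remain nonsingular; Condition~\ref{invert} is precisely what rules out degenerate behaviour. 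A secondary technical point is that the needlet kernel here has unbounded frequency support, so the sums defining $\beta_{jk;p}$ are genuinely infinite; one must verify the chaos-expansion manipulations (in particular interchanging summation and expectation, and the contraction estimates) are justified, which is guaranteed by Condition~\ref{cdA} ensuring $\sum_l (2l+1) C_l (l(l+1)/B^{2j})^{2p} e^{-2l(l+1)/B^{2j}} < \infty$. Once these pieces are assembled, the Cramér--Wold device and Peccati--Tudor deliver $\Omega_j^{-1/2} h_{N_j}\to_d N(0,I_U)$.
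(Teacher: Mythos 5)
Your proposal is correct and takes essentially the same route as the paper, which likewise reduces the claim to a method-of-moments/diagram-formula (equivalently, Nualart--Peccati--Tudor fourth-moment) argument for Hermite polynomials of the normalized coefficients, defers the combinatorial details to Lemma 6 of Baldi et al.\ (2009), and isolates exactly the ingredient you identify: the choice $p>(\alpha+\delta)/4$ forces the exponent $4p+2-\alpha$ in Theorem \ref{uno} to exceed $2+\delta$, so that the correlations are summable over the $B^{-j}$-mesh and all higher cumulants vanish. Two harmless slips in your write-up: the inequality chain should read $4p+2-\alpha>2+\delta$ rather than $\delta>2$, and the exponent in (\ref{3.1}) is fixed at $4p+2-\alpha$ (enlarging $M$ does not increase it); neither affects the conclusion.
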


\begin{proof}
The asymptotic behaviour of our polynomial statistics can be established by
means of the method of moments. In particular, it is possible to exploit the
diagram formula for higher order moments of Hermite polynomial, as explained
for instance in \cite{np},\cite{s}. The details are the same as in \cite%
{bkmpb}, and thus they are omitted for brevity's sake. We only note that, in
order to be able to use Lemma 6 in that reference, we need to ensure that%
\begin{equation*}
\left\vert Corr\left( \beta _{jk;p},\beta _{jk^{\prime };p}\right)
\right\vert \leq \frac{C}{\left( 1+B^{j}d\left( \xi _{jk},\xi _{jk^{\prime
}}\right) \right) ^{2+\delta }}\text{ , some }C>0\text{ .}
\end{equation*}%
In view of (\ref{3.1}), this motivates the tighter limit we need to impose
on the value of $p.$
\end{proof}

\negthinspace

It may be noted that the covariance matrix $\Omega _{j}$ can itself be
consistently estimated from the data at any level $j,$ for instance by means
of the bootstrap/subsampling techniques that are detailed in \cite{bkmp}.
Again, the arguments of that paper lend themselves to straightforward
extensions to the present circumstances, as they rely uniquely upon the
covariance inequalities for the random wavelet coefficients. Likewise, the
previous results may be extended to cover statistical functionals over
different frequencies, for instance the bispectrum (\cite{ejslan,m2007}). A
much more challenging issue relates to the relaxation of the Gaussianity
assumptions, which is still under investigation.

\newpage

Corresponding Author:

Domenico Marinucci

Dipartimento di Matematica

Universita' di Roma Tor Vergata

via della Ricerca Scientifica 1

00133 Italy

email: marinucc@mat.uniroma2.it

tel. +39-0672594105

fax +39-0672594699

\end{document}